\numberwithin{equation}{section}
 \newtheorem{theorem}{Theorem}[section]
\theoremstyle{definition}
\newcommand{\e}{\end{document}}
\begin{document}

\thispagestyle{empty}

\author{
{{\bf  M. A. El-Damcese$^1$, Abdelfattah Mustafa$^{2,}$\footnote{Corresponding author: abdelfatah\_mustafa@yahoo.com}, }} \\
{{ \bf B. S. El-Desouky$^2$ \; and   M. E. Mustafa$^2$} }
 { }\vspace{.2cm}\\
 \small \it $^{1}$Tanta University, Faculty of Science, Mathematics Department, Egypt.\\
 \small \it $^{2}$Department of Mathematics, Faculty of Science, Mansoura University, Mansoura 35516, Egypt.
}

\title{The Odd Generalized Exponential Gompertz Distribution}

\date{}

\maketitle
\small \pagestyle{myheadings}
        \markboth{{\scriptsize The Odd Generalized Exponential Gompertz Distribution }}
        {{\scriptsize {M. A. El-Damcese, Abdelfattah Mustafa, B. S. El-Desouky and   M.E. Mustafa}}}

\hrule \vskip 8pt

\begin{abstract}
In this paper we propose a new lifetime model, called the odd generalized exponential gompertz distribution, We obtained some of its mathematical properties. Some structural properties of the new distribution are studied. The method of maximum likelihood is used for estimating the model parameters and the observed Fisher's information matrix is derived. We illustrate the usefulness of the proposed model by applications to real data.
\end{abstract}

\noindent
{\bf Keywords:}
{\it Gompertz distribution; Hazard function; Moments; Maximum likelihood estimation.}

\noindent
{\bf 2010 MSC:} {\em  60E05,  62F10, 62F12, 62N02, 62N05.}

\section{Introduction}
In the analysis of lifetime data we can use the Gompertz, exponential and generalized exponential distributions. It is known that the exponential distribution have only constant hazard rate function where as Gompertz, and generalized exponential distributions can have only monotone (increasing in case of Gompertz and increasing or decreasing in case of generalized exponential distribution) hazard rate. These distributions are used for modelling the lifetimes of components of physical systems and the organisms of biological populations. The Gompertz distribution has received considerable attention from demographers and actuaries. Pollard and Valkovics \cite{14} were the first to study the Gompertz distribution, they both defined the moment generating function of the Gompertz distribution in terms of the incomplete or complete gamma function and their results are either approximate or left in an integral form. Later, Marshall and Olkin \cite{10} described the negative Gompertz distribution; a Gompertz distribution with a negative rate of aging parameter.

\noindent
Recently, a generalization of the Gompertz distribution based on the idea given in \cite{2} was proposed by \cite{3} this new distribution is known as generalized Gompertz (GG) distribution which includes the exponential (E), generalized exponential (GE), and Gompertz (G) distributions. A new generalization of th Gompertz (G) distribution which results of the application of the Gompertz distribution to the Beta generator proposed by  \cite{4}, called the Beta-Gompertz (BG) distribution which introduced by  \cite{8}. On the other hand the two-parameter exponentiated exponential or generalized exponential distribution (GE) introduced by \cite{2}. This distribution is a particular member of the exponentiated Weibull (EW) distribution introduced by \cite{11}. The GE distribution is a right skewed unimodal distribution, the density function and hazard function of the exponentiated exponential distribution are quite similar to the density function and hazard function of the Gamma distribution. Its applications have been wide-spread as model to power system equipment, rainfall data, software reliability and analysis of animal behavior.

\noindent
Recently  \cite{15} proposed a new class of univariate distributions called the odd generalized exponential (OGE) family and studied each of the OGE-Weibull (OGE-W) distribution, the OGE-Fr\'{e}chet (OGE-Fr) distribution and the OGE-Normal (OGE-N) distribution. This method is flexible because of the hazard rate shapes could be increasing, decreasing, bathtub and upside down bathtub.

\noindent
In this article we present a new distribution from the exponentiated exponential distribution and gompertz distribution called the Odd Generalized Exponential-Gompertz (OGE-G) distribution using new family of univariate distributions proposed by \cite{15}.

\noindent
A random variable $X$ is said to have generalized exponential (GE) distribution with parameters $\alpha ,\beta $ if the cumulative distribution function (CDF) is given by
\begin{equation} \label{1}
F(x)=\left( 1-e^{-\alpha x}\right)^{\beta }, \; x>0, \, \alpha>0, \beta >0.
\end{equation}

\noindent
The Odd Generalized Exponential family by \cite{15} is defined as follows. Let  $G(x; \epsilon) $ is the CDF of any distribution depends on parameter $\epsilon$ and thus the survival function is $\overline{G}(x,\epsilon )=1-G(x; \epsilon)$, then the CDF of OGE-family  is defined by replacing $x$ in CDF of GE in equation (\ref{1}) by $\frac{G(x;\epsilon )}{\overline{G}(x,\epsilon )}$ to get
\begin{equation} \label{2}
F(x;\alpha ,\epsilon ,\beta )=\left[ 1-e^{-\alpha \frac{G(x;\epsilon)}{\overline{G}(x,\epsilon )}}\right] ^{\beta}, \;x>0, \alpha >0,\epsilon >0, \beta>0.
\end{equation}

\noindent
This paper is outlined as follows. In Section 2, we define the cumulative distribution function, density function, reliability function and hazard function of the Odd Generalized exponential-Gompertz (OGE-G) distribution. In Section 3, we introduce the statistical properties include, the quantile function, the mode, the median and the moments. Section 4 discusses the distribution of the order statistics for (OGE-G) distribution. Moreover, maximum likelihood estimation of the parameters is determined in Section 5. Finally, an application of OGE-G using a real data set is presented in Section 6.

\section{The OGE-G Distribution}
\subsection{OGE-G specifications}
In this section we define new four parameters distribution called Odd Generalized Exponential-Gompertz distribution with parameters $\alpha,\lambda, c,\beta $ written as OGE-G($\Theta $), where the vector $\Theta $ is defined by $\Theta =(\alpha,\lambda, c,\beta )$.

\noindent
A random variable $X$ is said to have OGE-G with parameters $\alpha,\lambda, c,\beta $ if its cumulative distribution function given as follows
\begin{equation} \label{3}
F(x;\Theta )=\left\{ 1-e^{-\alpha \left[ e^{\frac{\lambda}{c}(e^{cx}-1)}-1\right] }\right\}^{\beta}, \;x>0,\alpha, \lambda, c, \beta>0,
\end{equation}

\noindent
where $\alpha,\lambda, c$  are scale parameters and $\beta $ is shape parameter.

\subsection{PDF and hazard rate}
If a random variable $X$ has CDF in (\ref{3}) then the corresponding probability density function is
\begin{equation} \label{4}
f(x;\Theta )=\alpha \beta \lambda e^{cx} e^{\frac{\lambda }{c}(e^{cx}-1)} e^{-\alpha \left[ e^{\frac{\lambda}{c}(e^{cx}-1)}-1\right]}
\left\{ 1-e^{-\alpha \left[e^{\frac{\lambda }{c}(e^{cx}-1)}-1\right] }\right\}^{\beta -1}, \;
\end{equation}
where $ x>0,\alpha, \lambda, c, \beta >0.$

\noindent
A random variable $X\thicksim  OGE-G(\Theta)$ has survival function in the form
\begin{equation} \label{5}
S(x)=1-\left\{ 1-e^{-\alpha \left[ e^{\frac{\lambda }{c}(e^{cx}-1)}-1\right] }\right\}^\beta.
\end{equation}

\noindent
The failure rate function of $OGE-G(\Theta $) is given by
\begin{equation} \label{6}
h(x)=\frac{f(x)}{S(x)}
=\frac{\alpha \beta \lambda e^{cx} e^{\frac{\lambda}{c}(e^{cx}-1)}e^{-\alpha \left[ e^{\frac{\lambda }{c}(e^{cx}-1)}-1\right]}
\left\{ 1-e^{-\alpha \left[ e^{\frac{\lambda }{c}(e^{cx}-1)}-1\right] }\right\}^{\beta -1}}{1-\left\{ 1-e^{-\alpha
\left[ e^{\frac{\lambda }{c}(e^{cx}-1)}-1\right]}\right\}^\beta}.
\end{equation}

\begin{center}
\includegraphics[width=0.44\textwidth]{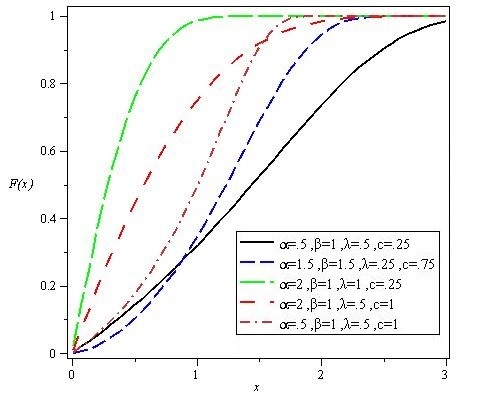}
\includegraphics[width=0.49\textwidth]{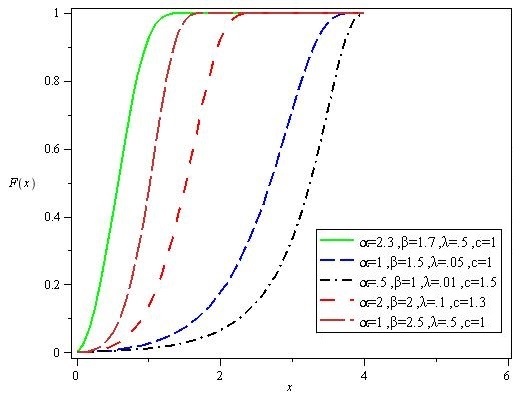} \\
Figure 1: The CDF of various OGE-G distributions for some values of the parameters.
\end{center}

\begin{center}
\includegraphics[width=0.49\textwidth]{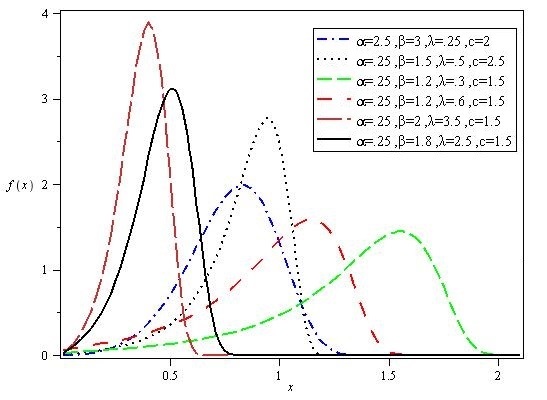}
\includegraphics[width=0.44\textwidth]{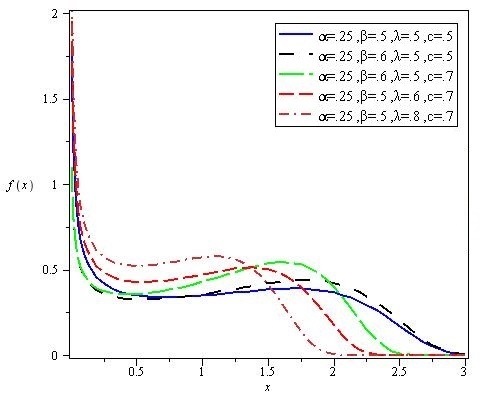}\\
Figure 2: The pdf's of various OGE-G distributions for some values of  the parameters.
\end{center}

\begin{center}
\includegraphics[width=0.6\textwidth]{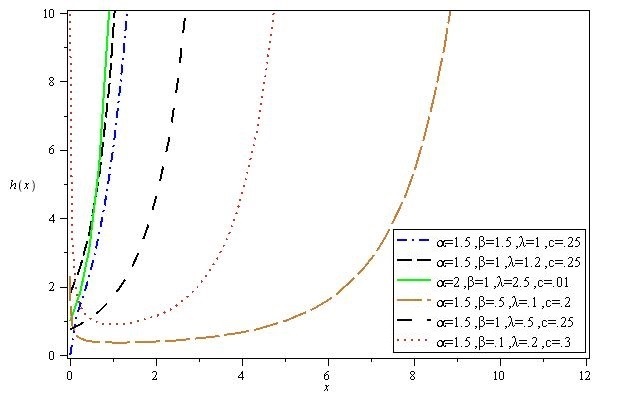}\\
Figure 3: The hazard of various OGE-G distributions for some values of the parameters.
\end{center}




\section{Statistical properties}
In this section, we study some statistical properties of OGE-G, especially quantile, median, mode and moments.

\subsection{Quantile and median of OGE-G}
The quantile of OGE-G($\Theta $) distribution is given by using
\begin{equation} \label{quan}
F(x_q)=q.
\end{equation}
Substituting from (\ref{3}) into (\ref{quan}), $x_q$ can be obtained as
\begin{equation} \label{7}
x_{q}=\frac{1}{c}\ln \left\{ 1+\frac{c}{\lambda }\ln \left[ 1-\frac{1}{\alpha }\ln (1-q^{\frac{1}{\beta }})\right] \right\}, \;0<q<1.
\end{equation}

\noindent
Setting $q=\frac{1}{2}$  in (\ref{7}),  we obtain the median of OGE-G($\alpha ,\lambda, c, \beta $) distribution as follows
\begin{equation} \label{8}
Med=\frac{1}{c}\ln \left\{ 1+\frac{c}{\lambda }\ln \left[ 1-\frac{1}{\alpha }\ln (1-(\frac{1}{2})^{\frac{1}{\beta }})\right] \right\}.
\end{equation}

\subsection{The mode of OGE-G}
In this subsection, we will derive the mode of the OGE-G($\Theta $) distribution by deriving its pdf with respect to $x$ and equal it to zero thus the mode of the OGE-G($\Theta $) distribution can be obtained as a nonnegative solution of the following nonlinear equation
\begin{eqnarray} \label{9}
&&
\left[ c+\lambda e^{cx}-\alpha \lambda e^{cx}e^{\frac{\lambda }{c}(e^{cx}-1)}\right] \left\{ 1-e^{-\alpha \left[ e^{\frac{\lambda }{c}(e^{cx}-1)}-1\right] }\right\} +\nonumber\\
&&
\; \; \; \; \; \; \; \; \; \;
\alpha \lambda (\beta -1)e^{cx}e^{\frac{\lambda }{c}(e^{cx}-1)}e^{-\alpha \left[ e^{\frac{\lambda }{c}(e^{cx}-1)}-1\right] }=0.
\end{eqnarray}%

\noindent
It is not possible to get an explicit solution of (\ref{9}) in the general case. Numerical methods should be used such as bisection or fixed-point method to solve it.

\subsection{The moments}
Moments are necessary and important in any statistical analysis, especially in applications. It can be used to study the most important features and characteristics of a distribution (e.g., tendency, dispersion, skewness and kurtosis). In this subsection, we will derive the rth moments of the OGE-G($\Theta $) distribution as infinite series expansion.

\begin{theorem} \label{theorem1}
If $X\sim OGE-G(\Theta)$, where $\Theta =(\alpha, \lambda, c, \beta)$, then the rth moment of $X$ is given by
\begin{equation*}
\mu_r^{'}=\sum_{i=0}^{\infty }\sum _{j=0}^{\infty } \sum_{k=0}^j \sum_{\ell=0}^{\infty } \sum_{m=0}^\ell (-1)^{i+j+k+m}
\begin{pmatrix} \beta -1 \\i \end{pmatrix}
\begin{pmatrix} \ell\\m \end{pmatrix}
\begin{pmatrix} j\\ k \end{pmatrix}
\frac{\alpha ^{j+1} \beta \lambda^{\ell+1}(i+1)^{j}(j-k+1)^{\ell} r!}{c^{\ell+r+1}j! \ell!(\ell-m+1)^{r+1}}.
\end{equation*}
\end{theorem}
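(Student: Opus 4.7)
The plan is to expand $f(x;\Theta)$ as a quintuple infinite series whose generic summand is a constant multiple of $e^{-bcx}$, and then interchange summation with integration so that each term reduces to the elementary gamma integral $\int_0^\infty x^r e^{-bcx}\,dx=r!/(bc)^{r+1}$. The five indices $i,j,k,\ell,m$ arise from five successive series expansions of the nested exponential pieces inside the pdf.

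The five steps, carried out in this order, are as follows. First, apply the generalized binomial series to the outermost power $\beta-1$, writing $(1-e^{-\alpha z})^{\beta-1}=\sum_{i\ge 0}(-1)^i\binom{\beta-1}{i}e^{-\alpha i z}$ where $z:=e^{\frac{\lambda}{c}(e^{cx}-1)}-1$; after absorbing the pdf's own factor $e^{-\alpha z}$ this becomes $e^{-\alpha(i+1)z}$. Second, Maclaurin-expand that factor, $e^{-\alpha(i+1)z}=\sum_{j\ge 0}\frac{(-\alpha(i+1))^j}{j!}z^j$. Third, binomially expand $z^j=(e^{w}-1)^j=\sum_{k=0}^{j}(-1)^k\binom{j}{k}e^{(j-k)w}$ with $w:=\frac{\lambda}{c}(e^{cx}-1)$; combining with the pdf's factor $e^{w}$ promotes the coefficient of $w$ to $j-k+1$. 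Fourth, Maclaurin-expand $e^{(j-k+1)w}=\sum_{\ell\ge 0}\frac{(j-k+1)^\ell \lambda^\ell}{c^\ell \ell!}(e^{cx}-1)^\ell$. Fifth, binomially expand $(e^{cx}-1)^\ell=\sum_{m=0}^{\ell}(-1)^m\binom{\ell}{m}e^{(\ell-m)cx}$ and absorb the remaining $e^{cx}$ factor from the pdf so that the exponent becomes $(\ell-m+1)cx$.

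With the integrand now a countable linear combination of pure exponentials in $x$, one multiplies through by $x^r$ and integrates on $(0,\infty)$, producing the factor $r!/[c(\ell-m+1)]^{r+1}$ for each term. Collecting constants then reproduces the claimed formula: the overall prefactor $\alpha\beta\lambda$ combines with $\alpha^j$ from Step 2 and $\lambda^\ell$ from Step 4 to yield $\alpha^{j+1}\beta\lambda^{\ell+1}$; the denominators $c^\ell$ from Step 4 and $c^{r+1}$ from the gamma integral merge into $c^{\ell+r+1}$; the factorials $j!$ and $\ell!$ come from Steps 2 and 4; the three binomial coefficients $\binom{\beta-1}{i},\binom{j}{k},\binom{\ell}{m}$ come from Steps 1, 3, 5; and the product of the five signs is $(-1)^{i+j+k+m}$ as displayed.

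The principal obstacle is bookkeeping rather than genuine difficulty: the five expansions must be ordered so that each pdf-originating exponential is absorbed at the correct stage, and the binomial conventions in Steps 3 and 5 must be chosen so that the resulting signs collapse to the pattern $(-1)^{i+j+k+m}$ rather than to a mixed one. A secondary technical point is the legitimacy of interchanging the quintuple sum with the integral; this is cleanest when $\beta$ is a positive integer, in which case the outermost series terminates and one may invoke Fubini--Tonelli on the remaining absolutely convergent Taylor expansions, with the general $\beta>0$ case then following by analytic continuation in $\beta$.
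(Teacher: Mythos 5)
Your proposal follows the paper's proof essentially step for step: the same five nested expansions (generalized binomial in $i$ on the power $\beta-1$, Maclaurin in $j$ after absorbing $e^{-\alpha z}$, binomial in $k$ on $z^j$, Maclaurin in $\ell$ after absorbing $e^{w}$, binomial in $m$ on $(e^{cx}-1)^\ell$), followed by term-by-term integration via a gamma-type integral. If anything you are more careful than the paper, which writes the final integral as $\int_0^\infty x^r e^{c(\ell-m+1)x}\,dx$ with a positive exponent (formally divergent for $c>0$) and quotes the gamma function with the wrong sign, whereas you at least flag the summation--integration interchange as the delicate point.
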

\begin{proof}
The rth moment of the random variable $X$  with pdf $f(x)$ is defined by
\begin{equation} \label{10}
\mu_r ^{'}=\int_0^\infty x^{r}f(x)dx.
\end{equation}

\noindent
Substituting from (\ref{4}) into (\ref{10}), we get
\begin{equation} \label{11}
\mu_r ^{'}=\int_0^\infty x^{r}\alpha \beta \lambda e^{cx}e^{\frac{\lambda }{c}(e^{cx}-1)}e^{-\alpha \left[ e^{\frac{\lambda }{c}
(e^{cx}-1)}-1\right] }\left\{ 1-e^{\mathbf{-\alpha }\left[ \mathbf{e}^{\frac{\lambda }{c}(e^{cx}-1)}\mathbf{-1}\right] }\right\} ^{\beta -1}dx.
\end{equation}

\noindent
Since $0<1-e^{-\alpha \left[ e^{\frac{\lambda }{c}(e^{cx}-1)}-1\right] }<1$  for $x>0$, we have
\begin{equation} \label{12}
\left\{ 1-e^{\mathbf{-\alpha }\left[ e^{\frac{\lambda }{c}(e^{cx}-1)}-1\right] }\right\}^{\beta -1}=\sum_{i=0}^\infty \begin{pmatrix} \beta -1 \\i \end{pmatrix}
(-1)^{i}e^{-\alpha i\left[ e^{\frac{\lambda }{c}(e^{cx}-1)}-1\right] }.
\end{equation}

\noindent
Substituting from (\ref{12}) into (\ref{11}), we obtain
\begin{equation*}
\mu_r ^{'}=\sum_{i=0}^\infty \alpha \beta \lambda (-1)^{i} \begin{pmatrix} \beta -1 \\ i \end{pmatrix} \int_0^\infty
x^{r}e^{cx}e^{\frac{\lambda }{c}(e^{cx}-1)}e^{-\alpha (i+1)\left[ e^{\frac{\lambda }{c}(e^{cx}-1)}-1\right] }dx.
\end{equation*}

\noindent
Using the series expansion of $e^{-\alpha (i+1)(e^{\frac{\lambda }{c}(e^{cx}-1)}-1)}$, we get
\begin{equation*}
\mu_r^{'}=\sum_{i=0}^\infty \sum_{j=0}^\infty \alpha \beta \lambda (-1)^{i+j}\begin{pmatrix} \beta -1 \\i \end{pmatrix}
\frac{\alpha ^{j}(i+1)^{j}}{j!} \int_0^\infty x^{r}e^{cx}e^{\frac{\lambda }{c}(e^{cx}-1)}(e^{\frac{\lambda }{c}(e^{cx}-1)}-1)^{j}dx.
\end{equation*}
Using binomial expansion of $(e^{\frac{\lambda }{c}(e^{cx}-1)}-1)^{j}$, we get
\begin{equation*}
\mu_r ^{'}=\sum_{i=0}^\infty \sum_{j=0}^\infty \sum_{k=0}^j (-1)^{i+j+k} \begin{pmatrix} \beta -1 \\ i \end{pmatrix} \begin{pmatrix} j\\ k \end{pmatrix} \frac{\alpha ^{j+1}\beta \lambda (i+1)^{j}}{j!} \int_0^\infty x^{r} e^{cx}e^{\frac{\lambda }{c}(j-k+1)(e^{cx}-1)}dx.
\end{equation*}

\noindent
Using series expansion of $e^{\frac{\lambda }{c}(j-k+1)(e^{cx}-1)}$, we get
\begin{eqnarray*}
\mu_r^{'} &=&
\sum_{i=0}^\infty \sum_{j=0}^\infty \sum_{k=0}^j \sum_{\ell=0}^\infty (-1)^{i+j+k}
\begin{pmatrix} \beta -1 \\ i \end{pmatrix} \begin{pmatrix} j \\ k \end{pmatrix}
\frac{\alpha ^{j+1}\beta \lambda^{\ell+1}(i+1)^j(j-k+1)^{\ell}}{c^{\ell} j!\ell!} \times
\\
&&
\; \; \; \; \; \; \; \; \; \;
\int_0^\infty x^{r}e^{cx}(e^{cx}-1)^{\ell} dx,
\\
&=&
\sum_{i=0}^\infty \sum_{j=0}^\infty \sum_{k=0}^j \sum_{\ell=0}^\infty \sum_{m=0}^{\ell} (-1)^{i+j+k+m}
\begin{pmatrix} \beta -1 \\ i \end{pmatrix} \begin{pmatrix} \ell \\ m \end{pmatrix} \begin{pmatrix} j\\ k \end{pmatrix}
\frac{\alpha ^{j+1}\beta \lambda^{\ell+1}(i+1)^{j}(j-k+1)^{\ell}}{c^{\ell}j! \ell!} \times
\\
&&
\; \; \; \; \; \; \; \; \; \;
\int_0^\infty  x^{r}e^{c(\ell-m+1)x}dx.
\end{eqnarray*}

\noindent
By using the definition of gamma function in the form
\begin{equation*}
\Gamma (z)=x^{z} \int_0^\infty  e^{tx}t^{z-1}dt, \;z,x>0.
\end{equation*}

\noindent
Thus we obtain the moment of OGE-G($\Theta$ ) as follows
\begin{equation*}
\mu_r^{'}=\sum_{i=0}^\infty \sum_{j=0}^\infty \sum_{k=0}^j \sum_{\ell=0}^\infty \sum_{m=0}^ \ell (-1)^{i+j+k+m} \begin{pmatrix}
\beta -1\\ i\end{pmatrix} \begin{pmatrix} \ell \\ m \end{pmatrix} \begin{pmatrix} j\\k \end{pmatrix} \frac{\alpha ^{j+1}\beta \lambda^{\ell+1}(i+1)^{j}(j-k+1)^{\ell}r!}{c^{\ell+r+1}j! \ell!(\ell-m+1)^{r+1}}.
\end{equation*}
This completes the proof.
\end{proof}

\section{Order Statistic}
Let $X_1, X_2,\cdots, X_n$ be a simple random sample of size $n$ from OGE-G($\Theta$) distribution with cumulative distribution function $F(x;\Theta )$ and probability density function $f(x;\Theta)$ given by (\ref{3}) and (\ref{4}) respectively. Let $X_{1:n}\leq  X_{2:n}\leq \cdots \leq X_{n:n}$ denote the order statistics obtained from this sample. The probability density function of $X_{r:n}$ is given by
\begin{equation} \label{13}
f_{r:n}(x,\Theta )=\frac{1}{B(r,n-r+1)}\left[ F(x,\Theta )\right]^{r-1} \left[ 1-F(x,\Theta )\right]^{n-r} f(x;\Theta ),
\end{equation}

\noindent
where $f(x;\Theta ) $ and $ F(x,\Theta)$ are the pdf and  cdf of OGE-G($\Theta $) distribution given by (\ref{3}) and (\ref{4}) respectively and  $B(.,.)$ is the beta function. Since $0< F(x,\Theta )< 1 $ for $x >0$, we can use the binomial expansion of $\left[ 1-F(x,\Theta )\right] ^{n-r}$ given as follows
\begin{equation} \label{14}
\left[ 1-F(x,\Theta )\right]^{n-r}=\sum_{i=0}^{n-r} \begin{pmatrix} n-r \\ i \end{pmatrix} (-1)^i \left[ F(x,\Theta )\right]^i.
\end{equation}

\noindent
Substituting from (\ref{14}) into (\ref{13}), we have
\begin{equation} \label{15}
f_{r:n}(x,\Theta )=\frac{1}{B(r,n-r+1)}f(x;\Theta ) \sum_{i=0}^{n-r} \begin{pmatrix} n-r \\ i \end{pmatrix} (-1)^i
\left[ F(x,\Theta )\right]^{i+r-1}.
\end{equation}

\noindent
Substituting from (\ref{3}) and (\ref{4}) into (\ref{15}), we obtain
\begin{equation} \label{16}
f_{r:n}(x;\alpha ,\lambda ,c,\beta )=\sum_{i=0}^{n-r} \frac{(-1)^{i}n!}{i!(r-1)!(n-r-i)!(r+i)}f(x,\alpha ,\lambda ,c,(r+i)\beta )
\end{equation}

\noindent
Thus $f_{r:n}(x;\alpha ,\lambda ,c,\beta )$ defined in (\ref{16}) is the weighted average of the OGE-G distribution with different shape parameters.

\section{Estimation and Inference}
Now, we determine the maximum-likelihood estimators (MLE's) of the OGE-G parameters.

\subsection{Maximum likelihood estimators}
Let $X_1, X_2,\cdots, X_n$ be a random sample of size $n$ from OGE-G($ \Theta $), where $\Theta =(\alpha, \lambda, c, \beta )$, then the likelihood function $\mathcal{L}$ of this sample is defined as
\begin{equation} \label{17}
\mathcal{L}=\prod_{i=1}^n f(x_i;\alpha, \lambda, c, \beta ).
\end{equation}

\noindent
Substituting from (\ref{4}) into (\ref{15}), we get
\begin{equation*}
\mathcal{L}=\prod_{i=1}^n \alpha \beta \lambda e^{cx_{i}}e^{\frac{\lambda }{c}(e^{cx_{i}}-1)}e^{-\alpha \left[ e^{\frac{\lambda }{c}(e^{cx_{i}}-1)}-1\right] }\left\{ 1-e^{-\alpha \left[ e^{\frac{\lambda }{c}(e^{cx}-1)}-1\right] }\right\}^{\beta -1}.
\end{equation*}

\noindent
The log-likelihood function is given as follow
\begin{eqnarray} \label{18}
 L &= & n\ln (\alpha )+n\ln (\beta )+n\ln (\lambda )+ c \sum_{i=0}^n  x_{i}+ \frac{\lambda }{c} \sum_{i=0}^n (e^{cx_{i}}-1)- \alpha \sum_{i=0}^n(e^{\frac{\lambda }{c}(e^{cx_{i}}-1)}-1)\nonumber\\
&& +
(\beta -1) \sum_{i=0}^n \ln \left\{ 1-e^{-\alpha \left[ e^{\frac{\lambda }{c}(e^{cx}-1)}-1\right]}\right\}.
\end{eqnarray}

\noindent
The log-likelihood can be maximized either directly or by solving the nonlinear likelihood equations obtained by differentiating equation (\ref{18}) with respect to $\alpha, \lambda, c$ and $\beta $. The components of the score vector $U(\Theta )= \left( \frac{\partial L}{\partial \alpha },\frac{\partial L}{\partial \lambda },\frac{\partial L}{\partial c},\frac{\partial L}{\partial \beta } \right) $ are given by
\begin{eqnarray} \label{19}
\frac{\partial L}{\partial \beta } & = &
\frac{n}{\beta }+\sum_{i=1}^n \ln \left\{ 1-e^{-\alpha \left[ e^{\frac{\lambda }{c}(e^{cx}-1)}-1\right] }\right\},
\\ \label{20}
\frac{\partial L}{\partial \alpha } & = &
\frac{n}{\alpha }-\sum_{i=1}^n(e^{\frac{\lambda }{c}(e^{cx_{i}}-1)}-1)-(\beta -1)\sum_{i=1}^n \frac{
1-e^{\frac{\lambda }{c}(e^{cx_{i}}-1)}}{e^{\alpha \left[ e^{\frac{\lambda }{c}(e^{cx_{i}}-1)}-1\right]}-1 },
\\ \label{21}
\frac{\partial L}{\partial \lambda }& = &
\frac{n}{\lambda }+\frac{1}{c}\sum_{i=1}^n(e^{cx_{i}}-1)-\frac{\alpha }{c} \sum_{i=1}^n (e^{cx_{i}}-1)e^{\frac{\lambda }{c}(e^{cx_{i}}-1)}
+ \frac{\alpha (\beta -1)}{c} \sum_{i=1}^n \frac{\left( e^{cx_{i}}-1\right) e^{\frac{\lambda }{c}(e^{cx_{i}}-1)}}{e^{\alpha\left[ e^{\frac{\lambda }{c}(e^{cx}-1)}-1\right]}-1 },
\nonumber\\
&&
\\ \label{22}
\frac{\partial L}{\partial c} & = &
\sum_{i=1}^n x_{i}-\sum_{i=1}^n \tau(x_i,\lambda,c) - \alpha \sum_{i=1}^n \tau(x_i,\lambda,c) e^{\frac{\lambda }{c} (e^{cx_{i}}-1)}  +
(\beta -1)\alpha \sum_{i=1}^n \frac{\tau(x_i,\lambda,c) e^{\frac{\lambda }{c}(e^{cx_{i}}-1)}}{e^{\alpha \left[ e^{\frac{\lambda }{c}(e^{cx}-1)}-1\right] }-1},
\nonumber\\
&&
\end{eqnarray}
where
$ \tau(x_i, \lambda,c)=-\frac{\lambda }{c^2} (e^{cx_{i}}-1)+\frac{\lambda}{c} x_{i}e^{cx_{i}}.$

\noindent
The normal equations can be obtained by setting the above non-linear equations (\ref{19})-(\ref{22}) to zero. That is, the normal equations take the following form
\begin{eqnarray} \label{23}
&&
\frac{n}{\beta }+\sum_{i=1}^n \ln \left\{ 1-e^{-\alpha \left[ e^{\frac{\lambda }{c}(e^{cx}-1)}-1\right] }\right\} =  0,
\\ \label{24}
&&
\frac{n}{\alpha }-\sum_{i=1}^n(e^{\frac{\lambda }{c}(e^{cx_{i}}-1)}-1)-(\beta -1)\sum_{i=1}^n \frac{
1-e^{\frac{\lambda }{c}(e^{cx_{i}}-1)}}{e^{\alpha \left[ e^{\frac{\lambda }{c}(e^{cx_{i}}-1)}-1\right]}-1 }  = 0,
\\ \label{25}
&&
\frac{n}{\lambda }+\frac{1}{c}\sum_{i=1}^n(e^{cx_{i}}-1)-\frac{\alpha }{c} \sum_{i=1}^n (e^{cx_{i}}-1)e^{\frac{\lambda }{c}(e^{cx_{i}}-1)}
+ \frac{\alpha (\beta -1)}{c} \sum_{i=1}^n \frac{\left( e^{cx_{i}}-1\right) e^{\frac{\lambda }{c}(e^{cx_{i}}-1)}}{e^{\alpha\left[ e^{\frac{\lambda }{c}(e^{cx}-1)}-1\right]}-1 } =  0,
\nonumber\\
&&
\\ \label{26}
&&
\sum_{i=1}^n x_{i}-\sum_{i=1}^n \tau(x_i,\lambda,c) - \alpha \sum_{i=1}^n \tau(x_i,\lambda,c) e^{\frac{\lambda }{c} (e^{cx_{i}}-1)}  +
(\beta -1)\alpha \sum_{i=1}^n \frac{\tau(x_i,\lambda,c) e^{\frac{\lambda }{c}(e^{cx_{i}}-1)}}{e^{\alpha \left[ e^{\frac{\lambda }{c}(e^{cx}-1)}-1\right] }-1} =  0.
\nonumber\\
&&
\end{eqnarray}

\noindent
The normal equations do not have explicit solutions and they have to be obtained numerically. From equation (\ref{23}) the MLEs of $\beta$  can be obtained as follows
\begin{equation} \label{27}
\hat{\beta}=\frac{-n}{\sum_{i=1}^n \ln \left\{ 1-e^{-\alpha \left[ e^{\frac{\lambda }{c}(e^{cx}-1)}-1\right] }\right\} }.
\end{equation}

\noindent
Substituting from (\ref{27}) into (\ref{24}), (\ref{25}), and (\ref{26}), we get the MLEs of $\alpha, \lambda, c$ by solving the following system of non-linear equations
\begin{eqnarray} \label{28}
&&
\frac{n}{\hat{\alpha}}-\sum_{i=1}^n(e^{\frac{\hat{\lambda}}{\hat{c}}(e^{\hat{c}x_{i}}-1)}-1)-(\hat{\beta}-1)\sum_{i=1}^n \frac{ 1-e^{\frac{\hat{\lambda}}{\hat{c}}(e^{\hat{c}x_{i}}-1)}}{e^{\hat{\alpha}\left[ e^{\frac{\hat{\lambda}}{\hat{c}}(e^{\hat{c}x_{i}}-1)}-1\right] }-1}=0,
\\ \label{29}
&&
\frac{n}{\hat{\lambda}}+\frac{1}{\hat{c}} \sum_{i=1}^n(e^{\frac{\hat{\lambda}}{\hat{c}}(e^{\hat{c}x_{i}}-1)}-1)- \frac{\hat{\alpha}}{\hat{c}} \sum_{i=1}^n( e^{\hat{c} x_{i}}-1)e^{\frac{\hat{\lambda}}{\hat{c}}(e^{\hat{c}x_{i}}-1)}
+\frac{\hat{\alpha}(\hat{\beta}-1)}{\hat{c}} \sum_{i=1}^n \frac{(  e^{\hat{c}x_{i}}-1) e^{\frac{\hat{\lambda}}{\hat{c}}(e^{\hat{c}x_{i}}-1)} }{e^{\hat{\alpha}\left[ e^{\frac{\hat{\lambda} }{\hat{c}}(e^{\hat{c}x_{i}}-1)}-1\right] }-1}=0,
\nonumber\\
&&
\\ \label{30}
&& \sum_{i=1}^n x_{i}- \sum_{i=1}^n \tau(x_i,\hat{\lambda}, \hat{c} )
-\hat{\alpha} \sum_{i=1}^n\tau(x_i,\hat{\lambda}, \hat{c} ) e^{\frac{\hat{\lambda}}{\hat{c}}(e^{\hat{c} x_{i}}-1)} +(\hat{\beta}-1)\hat{\alpha}\sum_{i=1}^n \frac{\tau(x_i, \hat{\lambda}, \hat{c})
e^{\frac{\hat{\lambda}}{\hat{c}}(e^{\hat{c}x_{i}}-1)}}{e^{\hat{\alpha}\left[ e^{\frac{\hat{\lambda}}{\hat{c}}(e^{\hat{c}x_{i}}-1)}-1\right] }-1}=0,
\nonumber\\
&&
\end{eqnarray}
where $\tau(x_i, \hat{\lambda}, \hat{c})= -\tfrac{\hat{\lambda}}{\hat{c}^{2}}({\small e}^{\hat{c}%
x_{i}}-1)+\tfrac{\hat{\lambda}}{\hat{c}}x_{i}{\small e}^{\hat{c}x_{i}}.$

\noindent
These equations cannot be solved analytically and statistical software can be used to solve the equations numerically. We can use iterative techniques such as Newton Raphson type algorithm to obtain the estimate $\hat{\beta}$.

\subsection{Asymptotic confidence bounds}
In this subsection, we derive the asymptotic confidence intervals of the unknown parameters $\alpha, \lambda, c, \beta $ when $\alpha>0, \lambda >0, c>0 $ and $\beta >0$. The simplest large sample approach is to assume that the MLEs($\alpha, \lambda, c, \beta $) are approximately multivariate normal with mean ($\alpha, \lambda, c, \beta $) and covariance matrix $I_{0}^{-1}$, where $I_{0}^{-1}$  is the inverse of the observed information matrix which defined as follows
\begin{eqnarray} \label{31}
I_{0}^{-1} & = &
-\left[
\begin{array}{cccc}
\frac{\partial^2 L}{\partial \alpha ^2} & \frac{\partial^2 L}{ \partial \lambda \partial \alpha } & \frac{\partial^2 L}{ \partial c \partial \alpha} & \frac{\partial^2 L}{ \partial \beta \partial \alpha} \\
\frac{\partial^2 L}{ \partial \alpha \partial \lambda} & \frac{\partial^2 L }{\partial \lambda^2 } & \frac{\partial^2 L}{ \partial c \partial \lambda} & \frac{\partial^2 L}{ \partial \beta \partial \lambda} \\
\frac{\partial^2 L}{\partial \alpha \partial c} & \frac{\partial^2 L}{ \partial \lambda \partial c} & \frac{\partial^2 L}{\partial c^2} & \frac{\partial^2}{\partial \beta \partial c } \\
\frac{\partial^2}{ \partial \alpha \partial \beta} & \frac{\partial^2}{ \partial \lambda \partial \beta} & \frac{\partial^2 L}{ \partial c \partial \beta} & \frac{\partial^2}{\partial \beta^2}
\end{array}%
\right] ^{-1}
\nonumber\\
&= &
\left[
\begin{array}{cccc}
var(\hat{\alpha}) & cov(\hat{\lambda}, \hat{\alpha}) & cov(\hat{c}, \hat{\alpha}) & cov(\hat{\beta}, \hat{\alpha}) \\
cov(\hat{\alpha}, \hat{\lambda}) & var(\hat{\lambda}) & cov(\hat{c}, \hat{\lambda}) & cov(\hat{\beta}, \hat{\lambda}) \\
cov(\hat{\alpha}, \hat{c}) & cov(\hat{\lambda}, \hat{c}) & var(\hat{c}) & cov(\hat{\beta}, \hat{c}) \\
cov(\hat{\alpha}, \hat{\beta}) & cov(\hat{\lambda}, \hat{\beta}) & cov(\hat{c}, \hat{\beta}) & var(\hat{\beta})
\end{array}%
\right] .
\end{eqnarray}

\noindent
The second partial derivatives included in $I_{0}^{-1}$ are given as follows
\begin{eqnarray*}
\frac{\partial^2 L}{\partial \beta ^{2}} & = &
\frac{-n}{\beta^2},
\\
\frac{\partial^2 L}{ \partial \alpha \partial \beta } & = & \sum_{i=1}^n \frac{\left( e^{\frac{\lambda }{c}(e^{cx_{i}}-1)}-1\right) e^{-\alpha \left[ e^{\frac{ \lambda }{c}(e^{cx_{i}}-1)}-1\right] }}{1-e^{-\alpha \left[e^{\frac{\lambda }{c}(e^{cx_{i}}-1)}-1\right] }},
\\
\frac{\partial^2 L}{ \partial \lambda \partial \beta } &= &
\frac{\alpha }{c} \sum_{i=1}^n \frac{\left( e^{cx_{i}}-1\right) A_{i}}{1-e^{-\alpha\left[ e^{\frac{\lambda}{c}(e^{cx_{i}}-1)}-1\right] }},
\\
\frac{\partial^2 L}{ \partial c \partial \beta} & = &
\alpha \sum_{i=1}^n \frac{A_{i}B_{i}}{1-e^{-\alpha \left[ e^{\frac{\lambda }{c}(e^{cx_{i}}-1)}-1\right] }},
\\
\frac{\partial^2 L}{\partial \alpha^2} & = &
\frac{-n}{ \alpha ^{2}}-(\beta -1) \sum_{i=1}^n\frac{\left( e^{\frac{\lambda }{c}(e^{cx_{i}}-1)}-1\right) ^{2}e^{-\alpha \left[
e^{\frac{\lambda }{c}(e^{cx_{i}}-1)}-1\right] }}{\left\{ 1-e^{-\alpha\left[ e^{\frac{\lambda }{c}(e^{cx}-1)} -1\right] }\right\}^2},
\\
\frac{\partial^2 L}{ \partial \lambda \partial \alpha } & =&
\frac{-1}{c}\sum_{i=1}^n \left( e^{cx_{i}}-1\right) e^{\frac{\lambda }{c}(e^{cx_{i}}-1)}+\frac{(\beta-1)}{c} \sum_{i=1}^n \frac{\left( e^{cx_{i}}-1\right) A_{i}C_{i}}{\left\{ 1-e^{-\alpha\left[ e^{\frac{\lambda }{c}(e^{cx}-1)}-1\right] }\right\}^2},
\\
\frac{\partial^2 L}{ \partial c \partial \alpha} &= &-
\sum_{i=1}^n B_{i}e^{\frac{\lambda }{c}(e^{cx_{i}}-1)}+(\beta -1)\sum_{i=1}^n \frac{A_{i}B_{i}C_{i}}{\left\{ 1-e^{-\alpha\left[ e^{\frac{\lambda }{c}(e^{cx}-1)}-1\right] }\right\}^2},
\\
\frac{\partial^2 L}{\partial \lambda^2} & = &
\frac{-n}{\lambda ^{2}}-\frac{\alpha }{c^{2}} \sum_{i=1}^n \left( e^{cx_{i}}-1\right) ^{2}e^{\frac{\lambda }{c}(e^{cx_{i}}-1)} + \frac{\alpha {\small (\beta -1)}}{c^{2}} \sum_{i=1}^n \frac{\left( e^{cx_{i}}-1\right) ^{2}A_{i}D_{i}}{\left\{ 1-e^{-\alpha \left[ e^{\frac{\lambda }{c}(e^{cx}-1)}-1\right]}\right\}^2},
\\
\frac{\partial^2 L}{ \partial c \partial \lambda } &= &
\frac{1}{\lambda }\sum_{i=1}^n B_i- \frac{\alpha}{c} \sum_{i=1}^n \left[ \frac{-1}{c}(e^{cx_i}-1)+ x_i e^{cx_i}+ B_i \left( e^{cx_i}-1 \right) \right] e^{\frac{\lambda }{c}(e^{cx_i}-1)}
\nonumber\\
&&
+
\frac{(\beta -1)\alpha }{\lambda } \sum_{i=1}^n \frac{\left[ 1-e^{-\alpha \left[ e^{\frac{\lambda}{c}(e^{cx_{i}}-1)} -1\right]} + \frac{\lambda }{c}(e^{cx_{i}}-1)D_{i}\right] A_i B_i}{\left\{ 1-e^{-\alpha \left[ e^{\frac{\lambda }{c}(e^{cx}-1)}-1\right]} \right\}^2},
\end{eqnarray*}

\begin{eqnarray*}
\frac{\partial^2 L}{\partial c^2}& = &\sum_{i=1}^n E_i-
\alpha \sum_{i=1}^n \left[ E_i+B_i^2 \right] e^{\frac{\lambda }{c}(e^{cx_{i}}-1)} +
\; \; \; \; \; \; \; \; \; \; \; \; \; \; \; \; \; \; \; \; \; \; \; \; \; \; \; \; \; \; \; \; \; \; \; \; \; \; \; \; \; \; \; \; \; \; \; \; \; \; \; \; \; \;
\\
&&
(\beta -1)\alpha \sum_{i=1}^n \frac{\left[ B_i^2 D_i + E_i \left( 1-e^{-\alpha \left[ e^{\frac{\lambda }{c}(e^{cx_{i}}-1)} -1 \right] }\right) \right] A_i}{\left\{1-e^{-\alpha\left[ e^{\frac{\lambda }{c}(e^{cx}-1)}-1\right] }\right\}^2},
\end{eqnarray*}

\noindent
where $ A_i  =  e^{\frac{\lambda }{c}(e^{cx_{i}}-1)}e^{-\alpha \left[ e^{\frac{\lambda }{c}(e^{cx}-1)}-1\right] },
\qquad B_i  =  -\frac{\lambda }{c^{2}}(e^{cx_{i}}-1)+\frac{\lambda }{c} (x_{i}e^{cx_{i}}),$
$$
C_i  =  1-\alpha (e^{\frac{\lambda }{c}(e^{cx_{i}}-1)}-1)-e^{-\alpha \left[ e^{\frac{\lambda }{c}(e^{cx}-1)}-1\right] },
\qquad
D_i  =  1-\alpha e^{\frac{\lambda }{c}(e^{cx_{i}}-1)}-e^{-\alpha \left[ e^{\frac{\lambda }{c}(e^{cx}-1)}-1\right] },
$$
$$
E_i  =  \frac{2\lambda }{c^{3}}\left( e^{cx_{i}} -1\right) -\frac{2\lambda }{c^{2}}x_{i}e^{cx_{i}}+\frac{\lambda }{c} x_{i}^{2}e^{cx_{i}}.
$$

\noindent
The asymptotic $(1-\gamma )100\%$ confidence intervals of $\alpha, \lambda, c $ and  $\beta $ are
$\hat{\alpha}\pm z_{\frac{\gamma }{2}}\sqrt{var(\hat{\alpha})}, \; \hat{\lambda}\pm z_{\frac{\gamma }{2}}\sqrt{var(\hat{\lambda})}, \; \hat{c}\pm z_{\frac{\gamma }{2}}\sqrt{var(\hat{c})} $ and  $\hat{\beta}\pm z_{\frac{%
\gamma }{2}}\sqrt{var(\hat{\beta})})$ respectively, where $z_{\frac{\gamma }{2}}$ is the upper ( $\frac{\gamma }{2}$)th percentile of the standard normal distribution.

\section{Data Analysis}
In this section we perform an application to real data to illustrate that the OGE-G can be a good lifetime model, comparing with many known distributions such as the Exponential, Generalized Exponential, Gompertz, Generalized Gompertz and Beta Gompertz distributions (ED, GE, G, GG, BG), see \cite{7,6,3,8}.

\noindent
Consider the data have been obtained from Aarset \cite{1}, and widely reported in some literatures, see  \cite{2,3,9,12,13}. It represents the lifetimes of 50 devices, and also, possess a bathtub-shaped failure rate property, Table 1.

\begin{center}
{Table 1: The data from Aarset \cite{1}.}
\begin{tabular}{rrrrrrrrrrrrrrrrrr} \hline
0.1  & 0.2 & 1 & 1 & 1 & 1 & 1 & 2 & 3 & 6 & 7 & 11 & 12 & 18 & 18 & 18 & 18  \\
18 & 21 & 32 & 36 & 40 & 45 & 46 & 47 & 50 & 55 & 60 & 63 & 63 & 67 & 67 & 67 & 67  \\
72 & 75 & 79 & 82 & 82 & 83 & 84 & 84 & 84 & 85 & 85 & 85 & 85 & 85 & 86 & 86 & \\
\hline
\end{tabular}
\end{center}

\noindent
Based on some goodness-of-fit measures, the performance of the OGE-G distribution is compared with others five distributions: E, GE, G, GG, and BG distributions. The MLE's of the unknown parameters for these distributions are given in Tables 2 and 3. Also, the values of the log-likelihood functions (-$L$),  the statistics K-S (Kolmogorov-Smirnov), AIC (Akaike Information Criterion), the statistics AICC (Akaike Information Citerion with correction) and BIC (Bayesian Information Criterion) are calculated for the six distributions in order to verify which distribution fits better to these data.

\newpage
\begin{center}
{Table 2: MLE's of parameters for Aarset data.}\\
\begin{tabular}{cccccc} \hline
\hspace{0.5cm} Model \hspace{0.5cm}  & \hspace{0.5cm} $\hat{\alpha}$ \hspace{0.5cm} & \hspace{0.5cm} $\hat{\beta}$ \hspace{0.5cm} & \hspace{0.5cm} $\hat{\lambda}$  \hspace{0.5cm}& \hspace{0.5cm}$\hat{c}$  \hspace{0.5cm} & \hspace{0.5cm} -L  \hspace{0.5cm}   \\ \hline
E     & 0.0219   &    --    &   --      &  --      & 241.0896 \\
GE    & 0.0212   & 0.9012   &     --    &   --     & 240.3855 \\
G     &    --    &     --   & 0.00970   & 0.0203   & 235.3308 \\
GG    &    --    & 0.2625   & 0.00010  & 0.0828    & 222.2441 \\
BG    & 0.2158   &  0.2467  & 0.00030   & 0.0882   & 220.6714 \\
OGE-G & 0.0400   & 0.1940   & 0.000345  & 0.0780   & 215.9735 \\ \hline
\end{tabular}
\end{center}

\begin{center}
{Table 3: The AIC, CAIC, BIC and K-S values for Aarset data.}
\begin{tabular}{cccccc} \hline
\hspace{0.3cm} Model \hspace{0.3cm} & \hspace{0.3cm} AIC \hspace{0.3cm} & \hspace{0.3cm} AICC \hspace{0.3cm}& \hspace{0.3cm} BIC \hspace{0.3cm}  & \hspace{0.3cm} K-S \hspace{0.3cm}& \hspace{0.3cm} p-value(K-S) \\ \hline
E     & 484.1792 & 484.2625 & 486.0912 & 0.19110 & 0.0519 \\
GE    & 484.7710 & 485.0264 & 488.5951 & 0.19400 & 0.0514 \\
G     & 474.6617 & 475.1834 & 482.3977 & 0.16960 & 0.1123 \\
GG    & 450.4881 & 451.0099 & 456.2242 & 0.14090 & 0.2739 \\
BG    & 449.3437 & 450.2326 & 456.9918 & 0.13220 & 0.3456 \\
OGE-G & 423.9470 & 424.8359 & 447.5951 & 0.13205 & 0.3476 \\ \hline
\end{tabular}
\end{center}

\noindent
Based on Tables 2 and 3, it is shown that OGE-G$ (\alpha,\lambda,c,\beta)$  model is the best among of those distributions because it has the smallest value of (K-S), AIC, CAIC  and BIC test.

\noindent
Substituting the MLE's of the unknown parameters $\alpha, \lambda, c, \beta $  into (\ref{31}), we get estimation of the variance covariance matrix as the following

$$
I_0^{-1}=\left[
\begin{array}{rrrr}
1.5022\times 10^{-3} & 2.7638\times 10^{-7} & -1.4405\times 10^{-4} & 9.6331\times 10^{-4} \\
2.7638\times 10^{-7} & 4.4773\times 10^{-8} & -1.7837\times 10^{-6} & 2.785\,8\times 10^{-6} \\
-1.4405\times 10^{-4} & -1.7837\times 10^{-6} & 8.4428\times 10^{-5} & -1.9127\times 10^{-4} \\
9.6331\times 10^{-4} & 2.7858\times 10^{-6} & -1.9127\times 10^{-4} & 1.5309\times 10^{-3}
\end{array}
\right]
$$

\noindent
The approximate 95\% two sided confidence intervals of the unknown parameters $\alpha, \lambda, c$  and $\beta$ are $\left[ 0,0.116 \right], \;  \left[ 0,0.001\right], \;  \left[ 0.060,0.096\right], \;  \left[ 0.116,0.270\right]$, respectively.\\

\noindent
To show that the likelihood equation have unique solution, we plot the profiles of the log-likelihood function of $\alpha, \lambda, \beta$ and $c$ in Figures 4-5.

\begin{center}
\includegraphics[width=0.43\textwidth]{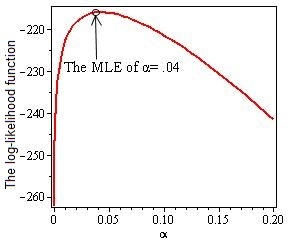}
\includegraphics[width=0.45\textwidth]{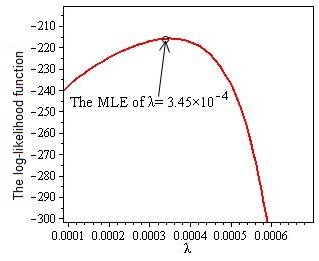}\\
Figure 4: The profile of the log-likelihood function of $\alpha, \lambda$.
\end{center}

\begin{center}
\includegraphics[width=0.37\textwidth]{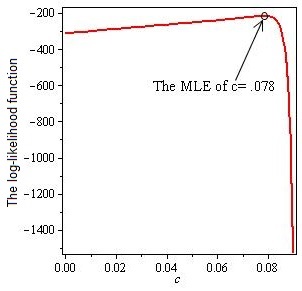}
\includegraphics[width=0.45\textwidth]{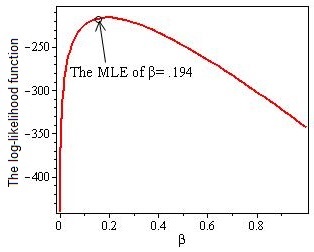}\\
Figure 5: The profile of the log-likelihood function of $c, \beta$.
\end{center}



\noindent
The nonparametric estimate of the survival function using the Kaplan-Meier method and its fitted parametric estimations when the distribution is assumed to be $ED, GED, GD, GGD$ and $OGE-GD$ are computed and plotted in Figure 6.

\begin{center}
\includegraphics[width=0.45\textwidth]{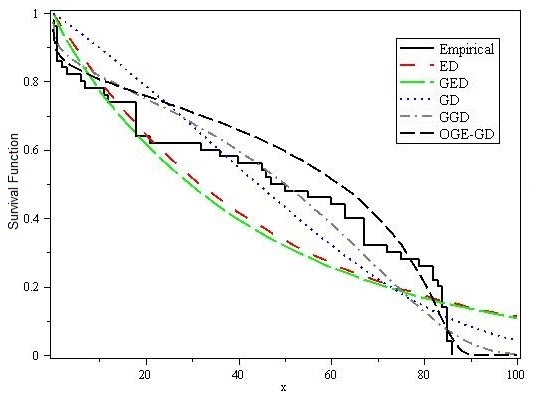}\\
Figure 6: The Kaplan-Meier estimate of the survival function.
\end{center}


\noindent
Figure 7, gives the form of the hazard rate for the $ED, GED, GD, GGD, BGD$ and $OGE-GD$ which are used to fit the data after replacing the unknown parameters included in each distribution by their MLE.

\begin{center}
\includegraphics[width=0.45\textwidth]{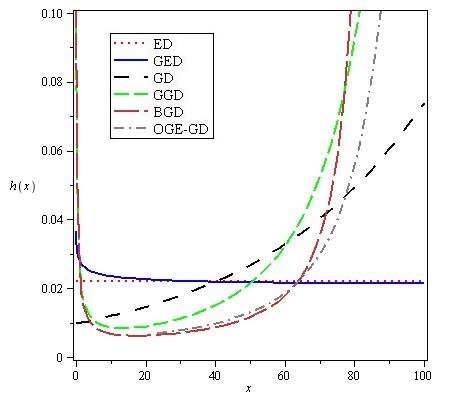}\\
Figure 7: The failure rate function for the data.
\end{center}


\section{Conclusions}
In this paper, we propose a new model, called the Odd Generalized Exponential-Gompertz (OGE-G) distribution and studied its different properties. Some statistical properties of this distribution have been derived and discussed. The quantile, median, mode and moments of OGE-G are derived in closed forms. The distribution of the order statistics are discussed. Both point and asymptotic confidence interval estimates of the parameters are derived using the maximum likelihood method and we obtained the observed Fisher information matrix. We use application on set of real data to compare the OGE-G with other known distributions such as
Exponential (E), Generalized Exponential (GE), Gompertz (G), Generalized Gompertz (GG) and Beta-Gompertz (BG). Applications on set of real data showed that the OGE-G is the best distribution for fitting these data sets compared with other distributions considered in this article.

\end{document}